\long\def\symbolfootnote[#1]#2{\begingroup
	\def\thefootnote{\fnsymbol{footnote}}\footnote[#1]{#2}\endgroup}
\newtheorem{theorem}{Theorem}[section]
\newtheorem{lemma}[theorem]{Lemma}
\newtheorem{thm}[theorem]{Theorem}
\newtheorem{prop}[theorem]{Proposition}
\newtheorem{cor}[theorem]{Corollary}
\newtheorem*{mainthm}{Main Theorem}
\newtheorem*{maincor}{Main Corollary}
\theoremstyle{definition}
\newtheorem{defin}[theorem]{Definition}
\newcommand{\id}{\mathrm{id}}
\newcommand{\W}{\mathcal{W}}
\newcommand{\U}{\mathcal{U}}
\begin{document}
	
	\title{Coxeter groups are biautomatic}

	\author[D.~Osajda]{Damian Osajda$^{\dag}$}
	\address{Instytut Matematyczny,
		Uniwersytet Wroc\l awski\\
		pl.\ Grun\-wal\-dzki 2/4,
		50--384 Wroc\-{\l}aw, Poland}
	\address{Institute of Mathematics, Polish Academy of Sciences\\
		\'Sniadeckich 8, 00-656 War\-sza\-wa, Poland}
	\email{dosaj@math.uni.wroc.pl}
	\thanks{$\dag \ddag$ Partially supported by (Polish) Narodowe Centrum Nauki, UMO-2018/30/M/ST1/00668.}
	
	\author[P.~Przytycki]{Piotr Przytycki$^{\ddag}$}
	
	\address{
		Department of Mathematics and Statistics,
		McGill University,
		Burnside Hall,
		805 Sherbrooke Street West,
		Montreal, QC,
		H3A 0B9, Canada}
	
	\email{piotr.przytycki@mcgill.ca}
	
	\thanks{$\ddag$ Partially supported by NSERC, AMS, and LabEx CARMIN, ANR-10-LABX-59-01}
	
	\maketitle
	
	\begin{abstract}
		\noindent
		We prove that Coxeter groups are biautomatic. From our construction of the biautomatic structure it follows that uniform lattices in isometry groups of buildings are biautomatic.
	\end{abstract}
	
	\section{Introduction}
	\label{sec:introd}
	
	Coxeter groups  were introduced in 1934 as abstractions of reflection groups \cite{Coxeter_1934}. 
	They play a fundamental role in, among others, the theory of Lie groups and algebras, the representation theory, the geometry of Riemannian symmetric spaces, the topology of aspherical manifolds. Consequently, Coxeter groups are important in other areas of science, e.g.\  physics, chemistry, and biology. They are foundational objects for buildings --- highly symmetric spaces having deep connections with algebraic groups.  On the other hand, multiple existing ways of constructing them, make Coxeter groups a source of numerous important, often very exotic, examples of groups. 
	Being studied thoroughly over decades, many important algebraic, geometric, and algorithmic properties of Coxeter groups have been established.
	Among few most important basic open problems concerning Coxeter groups, there has been the question of biautomaticity.
	
	The notion of biautomaticity was introduced in the classical book by  Epstein--Cannon--Holt--Levy--Paterson--Thurston \cite{Epstein_et_al_1992} as a very powerful means of understanding a group.
	Having biautomaticity established for a finitely generated group, very roughly speaking, we know how to move, using the generators, between any two given elements of the group. Moreover, the resulting paths are determined by a finite state automaton,
	and are stable in the sense that changing slightly the endpoints does not perturb the paths too much. Such a property should be thought of as a strong form of controlling the structure of the group.
	
	\begin{mainthm}
		Every Coxeter group is biautomatic.
	\end{mainthm}
	
	Many partial results in this direction have been obtained in the past. 
	Davis--Shapiro \cite{Davis-Shapiro_1991} showed a conjecturally weaker feature of all Coxeter groups --- the automaticity, under the assumption of the Parallel Wall Theorem, and showed that their language does not provide a biautomatic structure.
	Brink--Howlett \cite{Brink-Howlett_1993} proved the Parallel Wall Theorem and hence established the automaticity of all Coxeter groups using the same language as \cite{Davis-Shapiro_1991}.  Biautomaticity  has been established for a few subclasses of Coxeter groups in: \cite{Epstein_et_al_1992} (Euclidean and Gromov hyperbolic), \cite{Niblo-Reeves_1998,Niblo_Reeves_2003} (right-angled),
	\cite{Bahls_2006,Caprace_Mulherr_2005} (no Euclidean reflection triangles), \cite{Caprace_2009} (relatively hyperbolic), \cite{Munro_et_al_2022} ($2$-dimensional).
	
	Furthermore, there is an intensive research effort in deeper understanding languages in Coxeter groups, for example the Davis--Shapiro--Brink--Howlett language, see e.g.\  \cite{Casselman_1994,Dyer-Hohlweg_2016,Yau_2021,Parkinson-Yau_2022} and references therein. This is primarily inspired by computations in representation theory, and leads to theoretical results concerning algorithmic aspects of the languages, as well as to explicit computations and software implementations of corresponding algorithms.
	
	For our proof of the Main Theorem we introduce a new geodesic language: the `voracious' language $\mathcal V$. Besides providing a biautomatic structure, it has other interesting features compared to the previously considered languages, e.g.\ to the aforementioned Davis--Shapiro--Brink--Howlett language. In particular, $\mathcal V$ is preserved by the automorphisms of a Coxeter group preserving its given generating set. An immediate consequence of this property, together with a result by \'Swi{\k a}tkowski \cite[Thm~6.7]{Swiatkowski_2006} on geodesic languages for Coxeter groups, is the following.   
	
	\begin{maincor}
		Uniform lattices in isometry groups of buildings are biautomatic.
	\end{maincor}
	
	A uniform lattice here means a group acting properly and cocompactly on the Davis realisation of a building associated to a Coxeter group. Previously, the biautomaticity of such lattices has been shown in few particular cases in: \cite{Epstein_et_al_1992,Cartwright-Shapiro_1995} (Gromov hyperbolic and some Euclidean), \cite{Niblo-Reeves_1998,Davis_1998} (right-angled), \cite{Gersten-Short_1990,Gersten-Short_1991,Noskov_2000,Swiatkowski_2006} (some Euclidean cases), \cite{Munro_et_al_2022} ($2$-dimensional).
\medskip	
	
\noindent \textbf{Organisation.} 
In Section~\ref{sec:intro} we recall the notions of a Coxeter group and a biautomatic structure, and we define the voracious projection and language $\mathcal V$ used to prove the Main Theorem.
In Section~\ref{sec:well defined}, we show that the voracious projection is well defined. In Section~\ref{sec:bound}, we prove that the
distance between any element of $W$ and its voracious projection is bounded above by a constant depending only on $W$. We verify parts~(ii) and~(iii)
of the definition of biautomaticity in Section~\ref{sec:fellow}. In Section~\ref{sec:regularity}, we prove the regularity of $\mathcal V$.

\medskip

\noindent \textbf{Acknowledgement.} We thank Adrien Abgrall, Pierre-Emmanuel Caprace, Chris Hruska, Jingyin Huang, and Zachary Munro for useful discussions. This paper was written during our stay at the Institut Henri Poincar\'e in Paris, which we thank for the hospitality.

\section{Preliminaries}
\label{sec:intro}

We follow the notation adopted in \cite{Munro_et_al_2022}. A \emph{Coxeter group} $W$ of \emph{rank} $k$ is a group generated by a finite set $S$ of size $k$ subject only to relations $s^2=1$
for $s\in S$ and $(st)^{m_{st}}=1$ for $s\neq t\in S$, where $m_{st}=m_{ts}\in \{2,3,\ldots,\infty\}$. Here the convention is that $m_{st}=\infty$
means that we do not impose a relation between $s$ and~$t$.

Consider an arbitrary group $G$ with a finite symmetric generating set
$S$. For $g\in G$, let $\ell(g)$ denote
the \emph{word length} of $g$, that is, the minimal number $n$
such that $g=s_1\cdots s_n$ with $s_i\in S$ for $i=1,\ldots,
n$. Let $S^*$ denote the set of all words over $S$.
If $v\in S^*$ is a word of length $n$, then by $v(i)$ we
denote the prefix of $v$ of length $i$ for $i=1,\ldots, n-1$,
and the word $v$ itself for $i\geq n$. For $1\leq i\leq j\leq
n,$ by $v(i,j)$ we denote the subword of $v(j)$ obtained by
removing $v(i-1)$. For a word $v\in S^*$, by $\ell(v)$ we denote
the word length of the group element that $v$ represents.

We say that $G$ is \emph{biautomatic} if there exists a regular language $\mathcal L\subseteq S^*$ (see Section~\ref{sec:regularity} for the
definition of regularity) and constants $C,C'$ satisfying the following conditions.
\begin{enumerate}[(i)]
\item For each $g\in G$, there is a word in $\mathcal L$
    representing $g$.
\item For each $s\in S$ and $g,g'\in G$ with $g'=gs,$ and each $v,v'\in \mathcal L$
    representing $g,g'$, for
    all $i\geq 1$ we have $\ell\big(v(i)^{-1}v'(i)\big)\leq C$.
\item For each $s\in S$ and $g,g'\in G$ with $g'=sg,$ and each $v,v'\in \mathcal L$ representing $g,g'$, for all $i\geq 1$ we have
    $\ell\big(v(i)^{-1}s^{-1}v'(i)\big)\leq C'$.
\end{enumerate}

This definition agrees with the characterisation of biautomaticity in  \cite[Lem~2.5.5]{Epstein_et_al_1992}, which is equivalent to the original definition of biautomaticity if in condition (i) the set of words in $\mathcal L$ representing each $g\in G$ is finite \cite[Thm~6]{Amrhein_2021}. Conditions (ii) and (iii) are called the `fellow traveller property'.

To define the voracious language, we need the following. By $X^1$ we denote the \emph{Cayley graph} of $W$, that is, the graph with vertex set $X^0=W$
and with edges (of length $1$) joining each $g\in W$ with $gs$, for $s\in S$. We
consider the action of $W$ on $X^0=W$ by left multiplication. This induces an action of $W$ on $X^1$.
For $r\in W$ a conjugate of an element of $S$, the \emph{wall}
$\mathcal W_r$ of $r$ is the fixed point set of~$r$ in $X^1$. We call $r$ the \emph{reflection} in $\mathcal W_r$ (for fixed $\mathcal W_r$ such $r$
is unique). Each wall~$\mathcal W$ separates $X^1$ into two components, called \emph{half-spaces}, and a geodesic edge-path in~$X^1$ intersects~$\mathcal W$ at most once \cite[Lem~2.5]{Ronan_2009}. Consequently, the distance in~$X^1$ between $g,h\in W$ is the number of walls separating $g$ and $h$. 

For $g\in W$, let $\mathcal W(g)$ be the set of walls $\mathcal W$ in $X^1$ that separate $g$ from the identity element $\id\in W$ and such that
there is no wall $\mathcal W'$ separating $g$ from $\mathcal W$. 

We consider the partial order $\preceq$ on $W$, where $p\preceq g$ if $p$ lies on a
geodesic in $X^1$ from $\id$ to $g$. Equivalently, there is no wall separating $p$ from both $\id$ and $g$.

For $g\in W$, let $P(g)\subset W$ be the set of elements $p\in W$ satisfying $p\preceq g$ and such that there is no wall in $\mathcal W(g)$ separating $p$ from
$\id$. Note that $P(g)$ is nonempty, since $\id\in P(g)$. In Section~\ref{sec:well defined} we will prove the following.

\begin{thm}
\label{thm:well} For every Coxeter group $W,$ and each $g\in W,$ the set $P(g)$ contains a largest element with respect to $\preceq$.
\end{thm}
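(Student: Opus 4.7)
The plan is to exhibit the largest element of $P(g)$ explicitly. Let $T = T(g) := \{s \in S : \ell(gs) < \ell(g)\}$ be the right descent set of $g$, and recall the classical fact that the subgroup $W_T := \langle T \rangle$ is a finite parabolic subgroup of $W$. Let $w_0 \in W_T$ be its longest element (which is automatically an involution), and set $\bar g := g w_0$. I claim $\bar g$ is the maximum of $P(g)$ with respect to $\preceq$. Standard properties of parabolic cosets give that $\bar g$ is the minimum-length element of the right coset $\mathcal R := g W_T$ and that $\ell(g) = \ell(\bar g) + \ell(w_0)$, so in particular $\bar g \preceq g$. The walls of $\mathcal R$ in $X^1$ are exactly $\{\W_{grg^{-1}} : r \in R(W_T)\}$, where $R(W_T)$ denotes the set of reflections of $W_T$; this set contains $\W(g) = \{\W_{gsg^{-1}} : s \in T\}$. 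If any wall $\W_{grg^{-1}}$ of $\mathcal R$ were to separate $\bar g$ from $\id$, the strong exchange property would force $(grg^{-1})\bar g = g(rw_0) \in \mathcal R$ to be strictly shorter than $\bar g$, a contradiction; hence no wall of $\mathcal R$, and a fortiori no wall of $\W(g)$, separates $\bar g$ from $\id$, so $\bar g \in P(g)$.

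To show maximality, let $p \in P(g)$ and consider the projection $\pi \colon W \to \mathcal R$, where $\pi(h)$ is characterised as the unique element of $\mathcal R$ on the same side of every wall of $\mathcal R$ as $h$. The previous paragraph yields $\pi(\id) = \bar g$. Since $p$ lies on the $\id$-side of every wall in $\W(g)$, so does $\pi(p) \in \mathcal R$. Under the bijection $\mathcal R \leftrightarrow W_T$ sending $gu \mapsto u$, the element $\bar g$ corresponds to $w_0$, which is the \emph{unique} element of $W_T$ having every element of $T$ as a left descent. Hence $\pi(p) = \bar g$, which means $p$ lies on the $\id$-side of every wall of $\mathcal R$. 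Since $p \preceq g$ and the walls separating $\id$ from $g$ decompose disjointly, via the reduced factorisation $g = \bar g w_0$, as the walls separating $\id$ from $\bar g$ together with the walls of $\mathcal R$, we conclude $p \preceq \bar g$.

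The main obstacle is the uniqueness step above, namely showing that the hypothesis ``$p$ lies on the $\id$-side of every wall of $\W(g)$'' (only the adjacent descent walls, rather than all walls of $\mathcal R$) already forces $\pi(p) = \bar g$. This reduces to the internal characterisation of $w_0$ in $W_T$ as the unique element whose left descent set contains all of $T$, which is a standard consequence of the exchange condition in finite Coxeter groups; combined with the gate property of the residue $\mathcal R$, it upgrades the weak hypothesis defining $P(g)$ into the strong conclusion $p \preceq \bar g$.
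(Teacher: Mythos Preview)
Your proof contains a genuine error: the identification $\W(g) = \{\W_{gsg^{-1}} : s \in T\}$ is false in general; only the inclusion $\supseteq$ holds. The set $\W(g)$ consists of all walls separating $g$ from $\id$ that are not separated from $g$ by any other wall, and since two \emph{intersecting} walls can never separate one another from a chamber, $\W(g)$ may be strictly larger than the set of descent walls. A minimal counterexample already occurs in $W=\langle s,t\mid s^2=t^2=(st)^3=1\rangle\cong S_3$ with $g=st$. Here $T(g)=\{t\}$, so your candidate is $\bar g=gt=s$. The two walls separating $g$ from $\id$ are $\W_s$ and $\W_{sts}$; in a finite Coxeter group every pair of walls intersects, so neither is separated from $g$ by another wall and hence $\W(g)=\{\W_s,\W_{sts}\}$. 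But $\W_s$ separates $\bar g=s$ from $\id$, so $\bar g\notin P(g)$; in fact $P(g)=\{\id\}$ and $p(g)=\id\neq s$. Note that this also refutes the weaker containment you use, namely that $\W(g)$ lies among the walls of the residue $\mathcal R=gW_T$: here $\mathcal R=\{st,s\}$ has the single wall $\W_{sts}$, which misses $\W_s\in\W(g)$. So the very first step, showing $\bar g\in P(g)$, already fails.

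For contrast, the paper does not attempt to write down $p(g)$ explicitly. It proves instead that any two elements $p_0,p_n\in P(g)$ admit a common upper bound in $P(g)$: one joins them by a geodesic (which automatically lies in $P(g)$) and then repeatedly replaces each local minimum $p_{i-1}\succeq p_i\preceq p_{i+1}$ by the opposite path around the finite dihedral residue $\langle r,q\rangle p_i$, where $\W_r,\W_q$ are the walls crossed at $p_i$. The substantive point, handled by a separate rank--$3$ lemma, is that every chamber of this dihedral residue still lies in $P(g)$; this is precisely where the interaction between $\W(g)$ and walls that are close to $g$ but not descent walls is controlled.
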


This largest element will be called \emph{the voracious projection} $p(g)$ of $g$. Note that $p(g)\neq g$ for $g\neq \id$.

We define the \emph{voracious language} $\mathcal V\subset S^*$ for $W$ inductively in the following way. Let $v\in S^*$ be a word of length $n$. If
$v$ represents the identity element of~$W$, then $v\in \mathcal V$ if and only if $v$ is the empty word. Otherwise, let $g\in W$ be the group element
represented by $v$, let $p$ be the voracious projection of $g$, and let $w=p^{-1}g\in W$ and $k=\ell(w)$. 
We declare $v\in \mathcal V$ if and only if
$v(n-k)\in \mathcal V$ and $v(n-k+1,n)$ represents~$w$. In particular, $v(n-k)$ represents $p$. It follows inductively that $n=\ell(g)$. Such a
language is called \emph{geodesic}. Note that the voracious language satisfies part~(i) of the definition of biautomaticity, and the set of words in $\mathcal V$ representing each $g\in G$ is finite.

The paths in $W$ formed by the words in the voracious language are inspired by the normal cube paths for $\mathrm{CAT}(0)$ cube complexes \cite[\S3]{Niblo-Reeves_1998} used to prove the biautomaticity for right-angled (or, more generally, cocompactly cubulated) Coxeter groups \cite{Niblo_Reeves_2003}.
Namely, the voracious projection $p(g)$ of $g$ is `so' voracious that the geodesics from $g$ to $p(g)$ intersect all the walls in $\W(g)$ (even if it means intersecting simultaneously other walls).

We will prove the Main Theorem with $\mathcal L$ the voracious language $\mathcal V$. 
It is clear from the definition that the voracious language is preserved by
the automorphisms of~$W$ stabilising~$S$, allowing us to apply \cite[Thm~6.7]{Swiatkowski_2006} on geodesic languages to obtain the Main Corollary concerning buildings.

\section{Voracious projection is well defined}
\label{sec:well defined}

\begin{defin} Let $r,q\in W$ be reflections. Distinct walls $\W_r,\W_{q}$ \emph{intersect}, if $\W_r$ is not contained in a half-space
for $\W_{q}$ (this relation is symmetric). Equivalently, $\langle r,q\rangle$ is a finite group. We say that such $r,q$ are
\emph{sharp-angled}, if $r$ and $q$ do not commute and $\{r,q\}$ is conjugate into $S$. In particular, there is a component of $X^1\setminus (\W_r\cup
\W_q)$ whose intersection $F$ with $X^0$ is a fundamental domain for the action of $\langle r,q\rangle$ on~$X^0$. We call such $F$ a \emph{geometric
fundamental domain for $\langle r,q\rangle$}.
\end{defin}

\begin{lemma}
\label{lem:key} Suppose that reflections $r,q\in W$ are sharp-angled, and that $g\in W$ lies in a geometric fundamental domain for $\langle
r,q\rangle$. Assume that there is a wall $\U$ separating $g$ from $\W_r$ or from $\W_q$. Let $\W'$ be a wall distinct from $\W_r,\W_q$ that is the
translate of $\W_r$ or $\W_q$ under an element of $\langle r,q\rangle$. Then there is a wall $\U'$ separating $g$ from $\W'$.
\end{lemma}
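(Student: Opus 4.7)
\smallskip

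\noindent\emph{Plan.} I will construct $\U'$ as a translate of $\U$ under a carefully chosen element of $\langle r,q\rangle$. Without loss of generality, assume that $\U$ separates $g$ from $\W_r$ (the other case being symmetric under swapping $r$ and $q$). Since $\W'$ is distinct from $\W_r,\W_q$ and is a translate of one of them, I may write $\W'=t\W_r$ for some $t\in\langle r,q\rangle$ (if instead $\W'$ is only a translate of $\W_q$, swap the roles below). There are exactly two such $t$, forming a right coset of the stabilizer $\{\id,r\}$ of $\W_r$.

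The first step is a structural observation: since $\U$ separates $g$ from $\W_r$, the walls $\U$ and $\W_r$ cannot intersect, and moreover $\U$ must lie in the half-space $H^+$ of $\W_r$ containing $g$. Indeed, otherwise, pick a vertex $v$ adjacent to $\W_r$ with $v\in H^+$. By the hypothesis, $v$ lies on the opposite side of $\U$ from $g$. However, a geodesic from $g$ to $v$ can be chosen to stay in $H^+$ by convexity of half-spaces in $X^1$, so such a geodesic would not cross $\U$ if $\U\not\subset H^+$, contradicting the required separation.

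Next, among the two candidates, I would single out the element $t$ that sends $H^+$ to the half-space of $\W'$ containing $g$. Exactly one candidate does this: if $t_0$ is one choice, then $t_0 r$ applies $r$ first (which swaps the sides of $\W_r$) and therefore sends $H^+$ to the opposite side of $\W'$. Define $\U'=t\U$. Applying $t$ to the separation hypothesis shows that $\U'$ separates $tg$ from $t\W_r=\W'$. Moreover, $\U'$ lies in the $g$-side of $\W'$ because $\U\subset H^+$ and $t$ was chosen to preserve ``being on the $g$-side''.

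The remaining and main task is to verify that $\U'$ separates $g$ itself from $\W'$, i.e.\ that $g$ lies on the same side of $\U'$ as $tg$. Equivalently, one must show that $\U$ does not separate $g$ from $t^{-1}g$. The hard part will be this last claim; I would approach it by analysing the position of $t^{-1}g$ relative to $\U$, using that $t^{-1}$ sends the $g$-side of $\W'$ back to $H^+$ (by the choice of $t$), so $t^{-1}g$ lies in $H^+$. Then one tries to show that a geodesic from $g$ to $t^{-1}g$ can be kept inside $H^+$ on the $g$-side of $\U$, using convexity of half-spaces together with the structure forced by the sharp-angled hypothesis: the sectors of $\langle r,q\rangle$ around $\W_r$ and $\W'$ are related by the dihedral action in such a way that the ``direction'' from $g$ towards $\W'$ avoids crossing $\U$. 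If the chosen $t$ unexpectedly fails, the plan is to fall back on the parallel construction with $\W_q$ in place of $\W_r$ (using a presentation $\W'=s\W_q$), taking advantage of the flexibility offered by the ``or'' in the hypothesis on $\U$.
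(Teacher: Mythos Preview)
Your plan has a structural gap that cannot be repaired within the chosen framework. You try to manufacture $\U'$ as $t\U$ for some $t\in\langle r,q\rangle$ with $t\W_r=\W'$ (or with $t\W_q=\W'$ as a fallback). But when the order $m_{rq}$ of $rq$ is even, the walls $\W_r$ and $\W_q$ lie in distinct $\langle r,q\rangle$-orbits, so $\W'$ may be a translate of $\W_q$ only. If at the same time the only available $\U$ separates $g$ from $\W_r$ and no wall separates $g$ from $\W_q$, your construction produces nothing. This situation actually occurs: take $m_{rq}=4$, $m_{sq}=2$, $m_{sr}=4$ (the Euclidean group $\tilde C_2$, with $\W_q:\{y=0\}$, $\W_r:\{y=x\}$, $\W_s:\{x=1\}$), let $g$ be a chamber near $(3,\tfrac12)$, and set $\W'=r\W_q=\{x=0\}$. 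Then $\U=\{y=x-2\}$ separates $g$ from $\W_r$, but no wall separates $g$ from $\W_q$ (the only walls disjoint from $\W_q$ are the lines $y=n$, and none lies strictly between $y=0$ and $y=\tfrac12$). Since $m_{rq}=4$ is even, $\W'$ is not a $\langle r,q\rangle$-translate of $\W_r$, so your primary construction is unavailable; and the fallback needs a $\U$ for $\W_q$, which does not exist. The lemma is nevertheless true here: $\U'=\W_s=\{x=1\}$ does the job, but it is not of the form $t\U$ for any $t\in\langle r,q\rangle$. Your reading of the ``or'' in the hypothesis as offering a choice is the source of the error; it is purely existential.

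Even in the cases where your $t$ exists, the ``hard part'' --- showing that $\U$ does not separate $g$ from $t^{-1}g$ --- is left as an intention rather than an argument; the convexity-of-half-spaces remark does not pin down which side of $\U$ the element $t^{-1}g$ lies on. The paper proceeds quite differently: it first reduces to a rank-$3$ Coxeter group via Dyer's theorem on reflection subgroups (applied to $\langle r,q,\text{reflection in }\U\rangle$), and then does a short explicit case analysis on $(m_{sr},m_{sq})$ and on $\W'$, producing $\U'$ as $\W_s$, $s\W_r$, $sr\W_q$, $srq\W_r$, or $srqr\W_q$ depending on the case. None of these is in general a $\langle r,q\rangle$-translate of $\U$; the third generator $s$ is essential.
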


\begin{proof}
Consider the group $W_0< W$ generated by the $3$ reflections: $r,q$, and the reflection in $\U$. By \cite{Dyer_1990}, with the rank bound established in his Corollary~3.11
(see also \cite{Deodhar_1989} or \cite[Prop 3]{Tits_1988}), we have that $W_0$ can be identified with a Coxeter group of rank~3 such that
\begin{itemize}
\item
the reflections of $W_0$ are reflections of $W$, and
\item
the connected components of the complement  in $X^1$ of the walls of $W_0$ correspond (equivariantly) to the elements of $W_0$, and
\item
pairs of such components with intersecting closure in $X^1$ correspond to pairs of elements of $W_0$ differing by a generator of $W_0$.
\end{itemize}
Let $g_0$ be the element of $W_0$ corresponding to the above component containing $g$. Then $r,q$ are still sharp-angled in $W_0$, with $g_0$
in a geometric fundamental domain for $\langle r,q\rangle$. Thus to prove the lemma, it suffices to prove it for $W$ of rank~3.

We can assume $S=\{r,q,s\}$, where $\id$ lies in the same geometric fundamental domain for $\langle r,q\rangle$ as $g$. Since $\U$ is disjoint from $\W_r$ or $\W_q$, the group~$W$ is infinite, so we can assume without loss of generality $m_{sr}\geq 3$. If $m_{sq}\geq 3$, or $m_{sq}=2$ and
\begin{itemize}
\item $m_{sr}=\infty$, or
\item $m_{sr}\geq 4$ and $\W'\neq q\W_r$, or
\item $m_{sr}=3$ and $\W'\notin\{ q\W_r,r\W_q,qr\W_q\}$,
\end{itemize}
then $\W_s$ is disjoint from $\W'$. Since $g$ is separated from~$\W_r$ or $\W_q$ by $\U$, we have $g\neq \id$. Thus $\U'=\W_s$ separates $g$ from
$\W'$, as desired. See Figure~\ref{fig:single}(a).

\begin{figure}[h!]
	\begin{center}
		\includegraphics[scale=0.77]{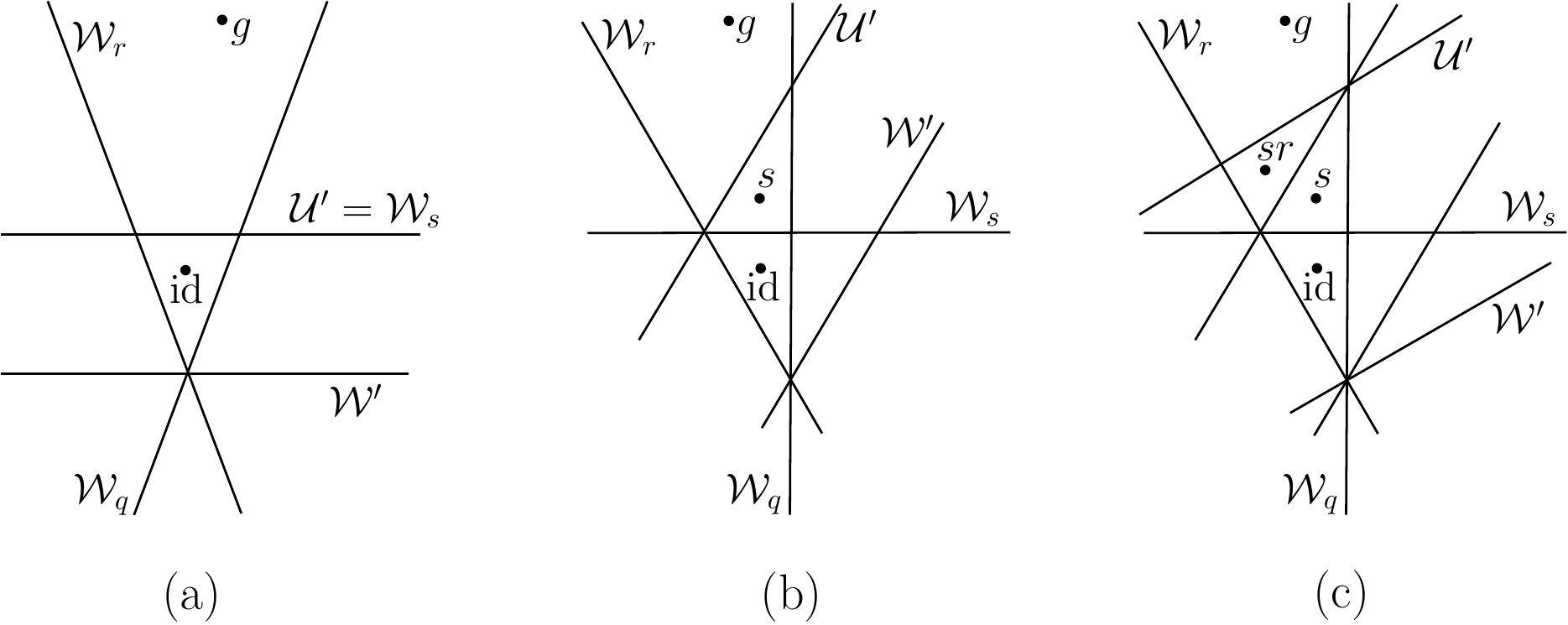}
	\end{center}
	\caption{Proof of Lemma~\ref{lem:key}.}
	\label{fig:single}
\end{figure}

If $m_{sq} =2, m_{sr}<\infty,$ and $\W'=q\W_r$, then let $\U'=s\W_r$. Note that $\U'$ is disjoint from $\W'$ since they are related by the point
symmetry $sq$. Furthermore, since $g$ is separated from~$\W_r$ or $\W_q$ by $\U$, and $m_{sr}<\infty,$ we have $g\neq \id, s$. Thus $\U'$ separates
$g$ from $\W'$, see Figure~\ref{fig:single}(b).

If $m_{sq} =2, m_{sr}=3,$ and $\W'=qr\W_q$, then let $\U'=sr\W_q$. Again $\U'$ is disjoint from $\W'$ since they are related by the point symmetry
$sq$. Furthermore, since $g$ is separated from~$\W_r$ or $\W_q$ by $\U$, we have $g\neq \id, s,sr$. Thus $\U'$ separates $g$ from $\W'$, see
Figure~\ref{fig:single}(c).

\begin{figure}[h!]
	\begin{center}
		\includegraphics[scale=0.7]{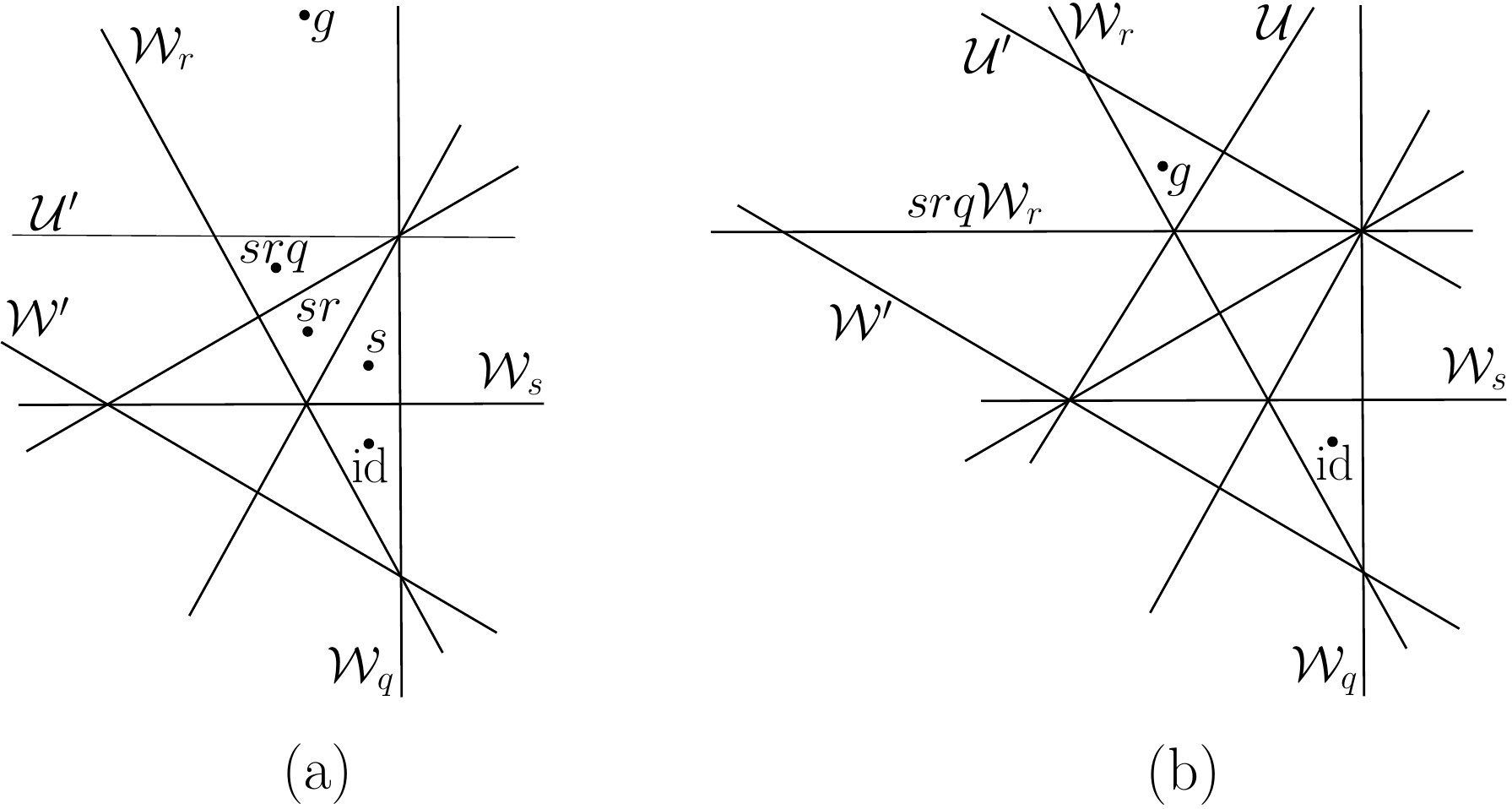}
	\end{center}
	\caption{Proof of Lemma~\ref{lem:key}, the case $m_{sq} =2, m_{sr}=3,$ and $\W'=r\W_q$.}
	\label{fig:last}
\end{figure}

It remains to consider the case where $m_{sq} =2, m_{sr}=3,$ and $\W'=r\W_q$. Suppose first the that wall $srq\W_r$ is disjoint from $\W'$, see
Figure~\ref{fig:last}(a). Then we can set $\U'=srq\W_r$, since $g\neq \id,s,sr,srq$. Second, suppose that $srq\W_r$ intersects $\W'$. Then let
$\U'=srqr\W_q$, see Figure~\ref{fig:last}(b). Note that $\U'$ is disjoint from $\W'$ since they are related by the point symmetry that is the composition of the reflections in 
$sr\W_q$ and in $\W_r$. If $\U'$ does not separate $g$ from $\W'$, then $g\in \{\id,s,sr,srq,srqr,srqrs\}$. Since $g$ is separated from~$\W_r$ or~$\W_q$ by $\U$, we
have that $g=srqrs$ is separated from $\W_q$ by  $\U=srqr\W_s$. But the reflection $r$ maps $\W_q$ and  $\U$ to $\W'$ and $srq\W_r$, contradicting
the assumption that the latter walls intersect.
\end{proof}

\begin{proof}[Proof of Theorem~\ref{thm:well}]
It suffices to show that for each $p_0,p_n\in P(g)$ there is $p\in P(g)$ satisfying $p_0\preceq p\succeq p_n$. Let $(p_0,p_1,\ldots,p_n)$ be the
vertices of a geodesic edge-path $\pi$ in $X^1$ from $p_0$ to $p_n$. Note that $\pi$ does not intersect the walls in $\W(g)$, since $p_0,p_n$ lie both in their
half-spaces containing $\id$. Furthermore, for any wall $\W$ containing $\id, g$ in the same half-space, we have that $p_0,p_n$ lie in that half-space,
and so do all~$p_i$. Consequently, $p_i\preceq g$, and so $p_i\in P(g)$.

We will now modify $\pi$ and replace it by another embedded edge-path (possibly not geodesic) from $p_0$ to $p_n$ with vertices in $P(g)$, so that there is no
$p_i$ with $p_{i-1}\succeq p_i\preceq p_{i+1}$. Then we will be able to choose $p$ to be the largest $p_i$ with respect to $\preceq$.

If $p_{i-1}\succeq p_i\preceq p_{i+1}$, then let $\W_r,\W_q$ be the walls separating $p_i$ from $p_{i-1},p_{i+1}$, respectively. Since
$p_{i-1}\preceq g,p_{i+1}\preceq g$, the walls $\W_r,\W_q$ intersect. Moreover, if $r$ and~$q$ do not commute, then $r,q$ are sharp-angled, with $g$
in a geometric fundamental domain for $\langle r,q\rangle$. We claim that all the elements of $R=\langle r,q\rangle (p_i)$ lie in~$P(g)$.

By \cite[Thm~2.9]{Ronan_2009}, we have that all the elements of $R$ lie on geodesics from~$p_i$ to $g$, and hence they are $\preceq g$. Since
$p_{i-1},p_{i+1}$ are both in $P(g)$, we have that $\W_r,\W_q\notin \W(g)$. It remains to justify that each remaining wall $\W'$ that is the translate
of $\W_r$ or $\W_q$ under an element of $\langle r,q\rangle$ does not belong to $\W(g)$. We can thus assume that $r$ and $q$ do not commute, since otherwise there is no
such remaining~$\W'$. Since $\W_r\notin \W(g)$, there is a wall~$\U$ separating $g$ from $\W_r$. By Lemma~\ref{lem:key}, there is a wall $\U'$ separating $g$
from~$\W'$, justifying the claim.

We now replace the subpath $(p_{i-1},p_i,p_{i+1})$ of $\pi$ by the second edge-path with vertices in~$R$ from $p_{i-1}$ to~$p_{i+1}$. This decreases the complexity
of $\pi$ defined as the tuple $(n_1,n_2,\ldots, n_{\ell(g)})$, where $n_j$ is the number of $p_i$ in $\pi$ with $\ell(p_i)=j$, with lexicographic
order. After possibly removing a subpath, we can assume that the new edge-path is embedded. After finitely many such modifications, we obtain the desired path.
\end{proof}

\section{Bounding the voracious projection}
\label{sec:bound}

\begin{prop}
\label{prop:bound} Let $W$ be a Coxeter group. There exists a constant $C=C(W)$ such that for each $g\in W,$ we have $\ell
\big(p(g)^{-1}g\big)\leq C$, where $p(g)$ is the voracious projection of~$g$.
\end{prop}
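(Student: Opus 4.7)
The plan is to identify the voracious projection $p(g)$ explicitly. Let $J := \{s \in S : \ell(gs) < \ell(g)\}$ be the right-descent set of $g$, and let $W_J := \langle J \rangle$ be the standard parabolic subgroup it generates. I claim that $W_J$ is finite and that $p(g) = g w_0$, where $w_0$ is the longest element of $W_J$. Granted this, $\ell(p(g)^{-1} g) = \ell(w_0)$ is bounded above by the constant $C(W) := \max\{\ell(w_0^{W_J}) : J \subseteq S,\ W_J \text{ finite}\}$, which depends only on $W$ since $S$ is finite.

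First I would describe $\W(g)$ concretely. The walls at distance $1$ from $g$ in $X^1$ are exactly $\{g\W_s : s \in S\}$ (the walls through the edges incident to $g$), and $g\W_s$ separates $g$ from $\id$ precisely when $s \in J$; hence $\W(g) = \{g\W_s : s \in J\}$. Since all walls in $\W(g)$ bound the chamber $g$, the classical fact that any right-descent set is spherical ensures that $W_J$ is finite, so $w_0$ exists. Next I would verify that $g w_0 \in P(g)$: the descent lemma gives $\ell(g w_0) = \ell(g) - \ell(w_0)$, so $g w_0 \preceq g$; and since $w_0$ sends the identity chamber of the finite subsystem of $W_J$ to the chamber opposite across every wall, $g w_0$ lies on the $\id$-side of each wall of $\W(g)$, so no wall of $\W(g)$ separates $g w_0$ from $\id$.

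To show $g w_0$ is the $\preceq$-maximum of $P(g)$, I would invoke the gate/projection property for the spherical residue $g W_J$ (a classical tool in building theory, cf.\ \cite{Ronan_2009}): for each $p \in P(g)$, the projection $p'$ of $p$ onto $g W_J$ satisfies $d(p, h) = d(p, p') + d(p', h)$ for every $h \in g W_J$, and $p$ and $p'$ lie on the same side of every wall of the finite subsystem. Since $p$ lies on the $\id$-side of each wall in $\W(g)$, so does $p'$; but $g w_0$ is the unique chamber of $g W_J$ with that property, forcing $p' = g w_0$. Setting $h = g$ yields $d(p, g) = d(p, g w_0) + \ell(w_0)$, and combining with $d(\id, p) + d(p, g) = d(\id, g) = d(\id, g w_0) + \ell(w_0)$ gives $d(\id, p) + d(p, g w_0) = d(\id, g w_0)$, i.e., $p \preceq g w_0$, completing the identification.

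The main obstacle is to invoke cleanly both the gate property for spherical residues and the descent lemma $\ell(g w_0^{W_J}) = \ell(g) - \ell(w_0^{W_J})$ for $J$ contained in the right-descent set of $g$. Both are classical in Coxeter group theory, but the bookkeeping needs some care.
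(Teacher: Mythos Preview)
There is a genuine gap: your description $\W(g)=\{g\W_s:s\in J\}$ is false in general. The condition ``no wall separates $g$ from $\W$'' does not force $\W$ to be adjacent to $g$, because a wall close to $g$ may intersect every wall adjacent to $g$ and hence fail to be separated from $g$ by any of them; this is precisely why the constant $Q$ in the Brink--Howlett Parallel Wall Theorem is typically much larger than~$1$. Concretely, take any $s,u\in S$ with $m_{su}\geq 3$ and let $g=su$. The two walls separating $g$ from $\id$ are $\W_s$ and $\W_{sus}$; since $s$ and $sus$ generate a finite dihedral group, these walls intersect, so neither separates $g$ from the other and therefore $\W(g)=\{\W_s,\W_{sus}\}$. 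But $\W_s$ is \emph{not} adjacent to $g$. Your recipe gives $J=\{u\}$ and $gw_0^J=s$, yet $s\notin P(g)$ because $\W_s\in\W(g)$ separates $s$ from $\id$; in fact $P(g)=\{\id\}$, so $p(g)=\id\neq gw_0^J$ and $\ell\big(p(g)^{-1}g\big)=2>1=\ell(w_0^J)$. Thus the identification $p(g)=gw_0^J$ fails, and with it your argument for the bound.

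The paper's route is accordingly different: it first bounds $|\W(g)|$ uniformly via the Parallel Wall Theorem, and then proves (using the $\mathrm{CAT}(0)$ geometry of the Davis complex, comparing the angle at which a geodesic from $g$ meets a wall with the distance from $g$ to that wall) that each individual wall in $\W(g)$ can be crossed along a geodesic towards $\id$ within a uniformly bounded number of steps. Iterating this over the boundedly many walls of $\W(g)$ produces an element of $P(g)$ at bounded distance from $g$, which yields the bound on $\ell\big(p(g)^{-1}g\big)$.
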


In the proof we need the following Parallel Wall Theorem.

\begin{thm}[{\cite[Thm~2.8]{Brink-Howlett_1993}}]
\label{thm:parallel} Let $W$ be a Coxeter group. There exists a constant $Q=Q(W)$ such that for each $g\in W$ and a wall $\mathcal W$ at distance $>
Q$ from $g$ in~$X^1$, there is a wall $\mathcal W'$ separating $g$ from $\mathcal W$.
\end{thm}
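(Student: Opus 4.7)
The plan is to prove this along the lines of Brink and Howlett's original argument, via the Tits geometric representation of~$W$ and the associated root system. First I would realise $W$ faithfully on $V := \bigoplus_{s \in S} \R \alpha_s$ with the symmetric bilinear form $B(\alpha_s,\alpha_t) := -\cos(\pi/m_{st})$ (using $\cos(\pi/\infty) := 1$), where each $s \in S$ acts by $v \mapsto v - 2B(v,\alpha_s)\alpha_s$. Setting $\Phi := W \cdot \{\alpha_s : s \in S\}$ and splitting $\Phi = \Phi^+ \sqcup \Phi^-$ according to whether the coordinates in the simple basis are all nonnegative or all nonpositive, each wall of $X^1$ corresponds to a pair $\pm\alpha$ of opposite roots. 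After translating by $g^{-1}$, I may assume $g = \id$, in which case walls are canonically labelled by $\Phi^+$.

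The central tool is the \emph{dominance} relation on $\Phi^+$: write $\alpha \trianglerighteq \beta$ when every $w \in W$ satisfying $w\alpha \in \Phi^-$ also satisfies $w\beta \in \Phi^-$. A short check yields the geometric content: the wall $\W_\beta$ separates $\id$ from $\W_\alpha$ precisely when $\alpha \triangleright \beta$ (strict dominance). Call $\alpha \in \Phi^+$ \emph{small} when it strictly dominates no positive root; equivalently, no wall lies between~$\id$ and $\W_\alpha$. Under this dictionary, the theorem reduces to showing that the set $\Sigma \subset \Phi^+$ of small roots is finite. Indeed, if so then $Q := \max_{\alpha \in \Sigma} d(\id, \W_\alpha)$ is a finite constant depending only on $W$, and any wall $\W = \W_\alpha$ with $d(\id,\W) > Q$ has $\alpha \notin \Sigma$, producing $\beta \in \Phi^+$ with $\alpha \triangleright \beta$ and hence a separating $\W_\beta$.

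The main obstacle is proving that $\Sigma$ is finite, which is the technical heart of Brink--Howlett. I would argue by induction on the \emph{depth} $dp(\alpha) := \min\{\ell(w) : w\alpha \in \Phi^-\}$, which coincides with $d(\id, \W_\alpha)$ up to an additive constant of at most~$1$. The base $dp(\alpha) = 1$ forces $\alpha \in \{\alpha_s : s \in S\}$. For the inductive step, pick $s \in S$ with $B(\alpha_s, \alpha) > 0$, so that $dp(s\alpha) = dp(\alpha) - 1$. The crucial point is that the values of $B(\alpha_s, \alpha)$ attainable when $\alpha$ is small form a finite set, determined by the entries $m_{st}$ of the Coxeter matrix via case analysis in the rank-two subsystems generated by pairs of reflections. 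This bounds at each depth the number of small roots obtained by reflecting small roots of lower depth, and iterating gives a uniform bound on $|\Sigma|$ in terms of~$W$, whence the theorem follows as above.
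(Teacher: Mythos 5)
The paper does not prove this statement: it is quoted directly from Brink--Howlett \cite[Thm~2.8]{Brink-Howlett_1993}, so there is no internal proof to compare against. Your proposal is a sketch of Brink--Howlett's own argument, and the framework you set up is the right one: the Tits representation, the dominance order on $\Phi^+$, the translation between ``$\W_\beta$ separates $\id$ from $\W_\alpha$'' and ``$\alpha$ strictly dominates $\beta$'', and the reduction of the Parallel Wall Theorem to finiteness of the set $\Sigma$ of small (elementary) roots are all correct and are exactly how Brink--Howlett proceed. The relation $dp(\alpha)=d(\id,\W_\alpha)+\tfrac12$ also checks out.

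The gap is in the last paragraph, which is where the entire difficulty of the theorem lives. Two things are missing. First, you tacitly assume that if $\alpha$ is small and $dp(s\alpha)=dp(\alpha)-1$ then $s\alpha$ is small; this is true, but it requires the nontrivial lemma that for distinct positive roots, $\alpha$ dominates $\beta$ precisely when $B(\alpha,\beta)\geq 1$ and $dp(\alpha)\geq dp(\beta)$ (so a non-simple small root has $B(\alpha_s,\alpha)<1$ for every $s$, and then one must still check that descent preserves smallness). Second, and more seriously, ``iterating gives a uniform bound on $|\Sigma|$'' does not follow from what precedes it. Knowing that $B(\alpha_s,\alpha)$ ranges over a finite set and that each small root of depth $d+1$ descends to one of depth $d$ bounds $|\Sigma\cap\{dp\leq d\}|$ for each fixed $d$ (which is trivially true anyway, since there are only finitely many roots of bounded depth), but it says nothing about why the depth of small roots is bounded, which is what finiteness of $\Sigma$ actually requires. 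That bound is the technical heart of Brink--Howlett: they use the finiteness of the attainable inner-product values, together with a careful analysis of how small roots interact with the rank-two standard parabolics, to show that the process of descending a small root can only pass through a finite ``state set'' and therefore terminates uniformly. Your sketch names the key ingredient (finiteness of the $B$-values) but the step from there to a uniform depth bound is where the proof really is, and it is not supplied.
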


In particular, for $g\in W$, each of the walls in $\mathcal W(g)$ is at distance $\leq Q$ from $g$.

\begin{lemma}
\label{lem:bound} Let $W$ be a Coxeter group. There exists a constant $C_0=C_0(W)$ such that for each $g\in W$ and each $\W \in \W(g)$, there is
$h\in W$ satisfying $h\preceq g$, at distance $\leq C_0$ from $g$ in $X^1$, and separated from $g$ by $\W$.
\end{lemma}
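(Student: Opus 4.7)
The strategy is to take $h$ to be the image under $r$ (the reflection in $\W$) of the vertex $g^*\in X^0$ that lies on the $g$-side of $\W$, is adjacent to an edge crossing $\W$, and is closest to $g$ among such vertices. Since $\W\in\W(g)$ means no wall separates $g$ from $\W$, Theorem~\ref{thm:parallel} gives $d(g,\W)\leq Q$, so such a $g^*$ exists with $d(g,g^*)\leq Q$. Setting $h:=rg^*$, we have $\W$ separating $h$ from $g$, $h$ on the $\id$-side of $\W$, and $d(g,h)\leq d(g,g^*)+1\leq Q+1$. Taking $C_0:=Q+1$, everything comes down to verifying $h\preceq g$.

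\textbf{Reduction to $g^*\preceq g$.} The edge $\{g^*,h\}$ crosses only the wall $\W$, so for any wall distinct from $\W$, the vertices $g^*$ and $h$ lie on the same side. Consequently the walls separating $h$ from $g$ are exactly $\W$ together with those separating $g^*$ from $g$. Since $\W$ already separates $\id$ from $g$, the condition $h\preceq g$ — that every wall separating $h$ from $g$ also separates $\id$ from $g$ — reduces to the claim that every wall $\U$ separating $g^*$ from $g$ also separates $\id$ from $g$, i.e.\ that $g^*\preceq g$.

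\textbf{Two cases for $\U$.} Assume for contradiction that some wall $\U$ separates $g^*$ from $g$ but does not separate $\id$ from $g$. First, if $\U$ is disjoint from $\W$, then since $\U$ separates the two $g$-side vertices $g^*,g$ it lies entirely on the $g$-side of $\W$; therefore the whole $\id$-side of $\W$ sits in one half-space of $\U$, the one containing $h$, and hence also the one containing $g^*$ (as the edge $\{g^*,h\}$ does not cross $\U$). Thus $g$ lies in the opposite half-space of $\U$ from $\W$, so $\U$ separates $g$ from $\W$, contradicting $\W\in\W(g)$. Second, if $\U$ intersects $\W$, I would invoke Lemma~\ref{lem:key}: after reducing (via Dyer's theorem, as in the proof of Lemma~\ref{lem:key}) to a rank-$3$ subgroup where $\{r,r_\U\}$ can be taken sharp-angled and $g$ placed in a geometric fundamental domain for $\langle r,r_\U\rangle$, the lemma converts $\U$ into another wall $\U'$ in the $\langle r,r_\U\rangle$-orbit of $\W,\U$ that separates $g$ from $\W$, again contradicting $\W\in\W(g)$.

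\textbf{Main obstacle.} The disjoint-walls case and the bound $d(g,h)\leq Q+1$ are routine; the hard part is the intersecting case. The difficulty is arranging the hypotheses of Lemma~\ref{lem:key} — sharp-angledness of $\{r,r_\U\}$, and the placement of $g$ in an appropriate geometric fundamental domain — and then identifying precisely which translate in the $\langle r,r_\U\rangle$-orbit of $\W$ is the one forced by $\U$ to be separated from $g$, so as to contradict $\W\in\W(g)$.
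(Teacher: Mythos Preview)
Your approach is quite different from the paper's, and the intersecting case is a genuine gap, not merely a detail to be filled in. The invocation of Lemma~\ref{lem:key} does not fit the situation. That lemma takes as input a pair of sharp-angled walls $\W_r,\W_q$ with $g$ in a geometric fundamental domain, together with a \emph{third} wall separating $g$ from $\W_r$ or $\W_q$, and outputs a wall separating $g$ from a chosen \emph{translate} $\W'\neq\W_r,\W_q$. In your setup you have only two walls, $\W$ and~$\U$; there is no third wall available to feed into the hypothesis (indeed, producing a wall that separates $g$ from $\W$ is precisely what you are trying to do). Dyer's theorem applied to $\langle r,r_\U\rangle$ yields a rank-$2$ reflection subgroup, not rank~$3$, so no extra wall appears that way either. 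Even granting the hypotheses, the conclusion would give a wall separating $g$ from some translate $\W'$ in the dihedral orbit, not from $\W$ itself, so no contradiction with $\W\in\W(g)$ results. Finally, there is no reason for $g$ to lie in a geometric fundamental domain for $\langle r,r_\U\rangle$: such a domain is the quadrant from which both $\W$ and $\U$ are ``nearest'', whereas you only know that $\W$ separates $g$ from $\id$ and $\U$ does not, which places $g$ in one specific quadrant with no control over whether it is the fundamental one. So the argument, as proposed, does not close.

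The paper proceeds entirely differently: rather than exhibit an explicit $h$, it takes $h$ minimising $d(g,h)$ among all $h\preceq g$ separated from $g$ by~$\W$, normalises to $h=\id$, and then bounds $d(\id,g)$ using the $\mathrm{CAT}(0)$ geometry of the Davis complex. The key point is that after the normalisation one has $\W=\W_s$ for some $s\in S$ and, by minimality, no $\W_t$ with $t\neq s$ separates $\id$ from $g$; hence the $\mathrm{CAT}(0)$ geodesic from $\id$ to $g$ must cross $\W_s$ inside the first cell it enters, at an angle bounded below by a constant depending only on $W$. Combining this with the Parallel Wall Theorem bound on $d(g,\W_s)$ gives the desired uniform bound. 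This bypasses entirely the combinatorial wall-crossing analysis you attempt.
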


\begin{proof} For $g\in W,$ and $\W \in \W(g)$, among elements $h\in W$ satisfying $h\preceq g$, and separated from $g$ by $\W$,
consider $h$ with minimal distance $C_0$ from $g$ in $X^1$. Our goal is to bound $C_0$ uniformly in $g$ and $\W$. We can assume $h=\id$.
Furthermore, by the minimality assumption, we have 
$\W=\W_s$ for $s\in S$.

Note that for $t\in S\setminus \{s\}$, the wall $\W_t$ does not separate $\id$ from $g$, since otherwise we could replace $\id$ by $t$ contradicting the minimality assumption.

We now use the \emph{Davis complex} $X$ of $W,$ which is obtained from $X^1$ by adding Euclidean polyhedra of edge length one spanned on all the cosets of finite $\langle T \rangle$ for $T\subseteq S$ (see \cite[Prop~7.3.4]{Davis_2008}). By \cite{Moussong_1988} (see also \cite{Bowditch_1995}), we have that $X$ is CAT(0). The fixed point sets of the
reflections of $W$ in $X$ are still called \emph{walls}, and they still separate $X$.

Let $\alpha$ be the minimal angle that can be formed between an intersection $M$ of a wall with one such polyhedron $\sigma$, and a geodesic in
$\sigma$ joining a vertex of $\sigma$ (all of which lie outside $M$) to a point of~$M$. Let $\gamma$ be the CAT(0) geodesic in $X$ between $\id$ and $g$. Since $X$ and $X^1$ are
quasi-isometric, we need to find a uniform bound for the length $c$ of $\gamma$. By Theorem~\ref{thm:parallel}, there is a uniform bound $d$ for the
CAT(0) distance from $g$ to $\W_s$ in $X$. Let $\sigma$ be the first polyhedron of $X$ with interior intersected by~$\gamma$, and let $M=\sigma\cap \W_s$. Note that $\gamma$
intersects $M$ (transversally at a point $m$) since $\gamma$ is disjoint from all $\W_t$, for $t\in S\setminus \{s\}$. The length $c_2$ of the second
component of $\gamma\setminus \{m\}$ is $\geq c-\mathrm{diam}(\sigma)$.

Since $X$ is CAT(0), by \cite[II.1.7(5)]{Bridson-Hae_1999}, we have $c_2\sin \alpha\leq d$, and so $c\leq \mathrm{diam}(\sigma)+d/\sin \alpha$, as desired.
\end{proof}

\begin{proof}[Proof of Proposition~\ref{prop:bound}]
By Theorem~\ref{thm:parallel}, there exists a constant $N=N(W)$ such that each $\W(g)$ has size $\leq N$. For each $g\in W$, applying at most $N$
times Lemma~\ref{lem:bound}, there is $h\in W$ satisfying $h\preceq g$, at distance $\leq C=C_0N$ from $g$, and separated from~$g$ by all $\W\in
\W(g)$. Consequently, $h\in P(g)$ and thus $h\preceq p(g)\preceq g$, implying $\ell \big(p(g)^{-1}g\big)\leq C$, as desired.
\end{proof}

\section{Fellow traveller property}
\label{sec:fellow}

In this section we verify parts (ii) and (iii) of the definition of biautomaticity.

\begin{lemma}
\label{lem:part2} Suppose that for $g,g'\in W$, we have $p(g)\preceq g'\preceq g$. Then $p(g')\preceq p(g)$.
\end{lemma}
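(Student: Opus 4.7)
The plan is to establish the stronger statement that $p(g')\in P(g)$; then by Theorem~\ref{thm:well}, since $p(g)$ is the $\preceq$-largest element of $P(g)$, this yields $p(g')\preceq p(g)$ at once.

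Checking $p(g')\in P(g)$ amounts to two conditions. The first, $p(g')\preceq g$, is immediate from $p(g')\preceq g'\preceq g$ by transitivity of $\preceq$. The substantive condition is that no wall in $\W(g)$ separates $p(g')$ from $\id$. I would argue this by contradiction: assuming some $\W\in\W(g)$ separates $p(g')$ from $\id$, the plan is to show $\W\in\W(g')$, which contradicts $p(g')\in P(g')$.

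To show $\W\in\W(g')$, I would first verify that $\W$ separates $g'$ from $\id$: since $p(g')\preceq g'$, no wall can separate $p(g')$ from both $\id$ and $g'$, so $\W$ does not separate $p(g')$ from $g'$, placing $g'$ on the same side of $\W$ as $p(g')$ and hence on the opposite side from $\id$. Next I would show that no wall separates $g'$ from $\W$, again by contradiction: if some $\W'$ did, then $\W$ and $\W'$ would be disjoint, and inspection of the three components of $X^1\setminus(\W\cup\W')$ would give that $\W'$ also separates $\id$ from $g'$. The hypothesis $g'\preceq g$ then precludes $\W'$ from separating $g'$ from $g$, so $g$ lies on the $g'$-side of $\W'$, opposite from $\W$. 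Thus $\W'$ separates $g$ from $\W$, contradicting $\W\in\W(g)$.

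The only real obstacle is the half-space bookkeeping for the disjoint pair $\W,\W'$: one must confirm that in the three-region decomposition of $X^1\setminus(\W\cup\W')$, the wall $\W'$ lies between $g'$ and $\W$ while $\W$ lies between $\W'$ and $\id$, forcing $\W'$ to separate $\id$ from $g'$. Aside from this elementary geometric step, everything reduces to a direct chain of implications from the definitions of $\preceq$, $\W(g)$, and $P(g)$.
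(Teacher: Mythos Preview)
Your proposal is correct and follows essentially the same approach as the paper: both reduce to showing $p(g')\in P(g)$, and both hinge on the claim that any $\W\in\W(g)$ separating $g'$ from $\id$ must lie in $\W(g')$. The paper asserts this claim without justification and organises the argument directly, whereas you argue by contradiction and supply the half-space bookkeeping for the disjoint pair $\W,\W'$ that the paper omits; otherwise the two proofs coincide.
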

\begin{proof} It suffices to prove $p(g')\in P(g)$. We have $p(g')\preceq g'\preceq g$. If $\W\in \W(g)$, and $\W$ separates $g'$ from $\id$, then we
 have $\W\in \W(g')$. Consequently, $\W$ separates $p(g')$ from $g'$, and hence from $g$.
\end{proof}

\begin{cor}
\label{cor:part2} The voracious language satisfies part (ii) of the definition of biautomaticity with $C$ replaced by $2C$ from
Proposition~\ref{prop:bound}.
\end{cor}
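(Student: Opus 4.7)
The plan is to verify part~(ii) of biautomaticity with constant $2C$ via an iterated application of Lemma~\ref{lem:part2}. Let $v, v' \in \mathcal V$ represent $g$ and $g' = gs$ respectively; by the symmetry of the statement, I would reduce to the case $\ell(g') = \ell(g) + 1$, so $g \preceq g'$. I would first establish the base chain $p(g) \preceq p(g') \preceq g \preceq g'$. The wall $\W$ through the edge $(g, g')$ is adjacent to $g'$ and separates $g'$ from $\id$, so $\W \in \W(g')$; since $\W$ cannot separate $p(g') \in P(g')$ from $\id$, this places $p(g')$ on the same $\id$-side of $\W$ as $g$, giving $p(g') \preceq g$. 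Lemma~\ref{lem:part2} applied with $g$ and $g'$ swapped then yields $p(g) \preceq p(g')$.

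Next I would iterate the chain down through all voracious projections. Writing $g_j := p^{(j)}(g)$ and $h_j := p^{(j)}(g')$, I would prove by induction on $j$ that
\[
g_{j+1} \preceq h_{j+1} \preceq g_j \preceq h_j.
\]
Given the chain at level $j$, two applications of Lemma~\ref{lem:part2} produce the next level: from $g_{j+1} \preceq h_{j+1} \preceq g_j$ we obtain $h_{j+2} \preceq g_{j+1}$, and from $h_{j+2} \preceq g_{j+1} \preceq h_{j+1}$ we obtain $g_{j+2} \preceq h_{j+2}$.

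Finally I would combine the chain with Proposition~\ref{prop:bound} to bound $\ell(v(i)^{-1}v'(i))$. The proposition gives $d(g_j, h_j) \leq C$ for every $j$; setting $m_j = \ell(g_j)$ and $m'_j = \ell(h_j)$, the milestones of $v$ and $v'$ interleave as $m_{j+1} \leq m'_{j+1} \leq m_j \leq m'_j$, so every $i \geq 1$ lies in an interval $[m_j, m'_j]$ or $[m'_{j+1}, m_j]$. In the first case $v(i)$ and $v'(i)$ lie on geodesic segments ending at $g_j$ and $h_j$ respectively, and triangle inequality through $g_j$ and $h_j$ yields $\ell(v(i)^{-1}v'(i)) \leq 2 d(g_j, h_j) \leq 2C$; in the second $v(i)$ and $v'(i)$ lie on the voracious segments emanating from $g_{j+1}$ and $h_{j+1}$, and triangle inequality through $h_{j+1}$ yields $\ell(v(i)^{-1}v'(i)) \leq 2(m_j - m_{j+1}) \leq 2C$.

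The main obstacle is the iterated chain: each inductive step requires two applications of Lemma~\ref{lem:part2} in precisely the right order so that the ``$p(a) \preceq b \preceq a$'' hypothesis is available for the correct pairs, and the $\preceq$-relations propagate coherently from one level to the next. Once the chain is in hand, the distance bound reduces to a routine case-by-case triangle-inequality computation driven by Proposition~\ref{prop:bound}.
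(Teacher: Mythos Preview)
Your approach is correct and essentially identical to the paper's: both establish the interleaving chain $\cdots \preceq p^2(g) \preceq p^2(g') \preceq p(g) \preceq p(g') \preceq g \preceq g'$ via iterated applications of Lemma~\ref{lem:part2}, then bound $\ell\big(v(i)^{-1}v'(i)\big)$ using Proposition~\ref{prop:bound}. The only differences are cosmetic: you spell out the base case $p(g')\preceq g$ via the wall through the edge $(g,g')$ (which the paper leaves implicit), while the paper's distance estimate is slightly cleaner in that it anchors both $v(i)$ and $v'(i)$ at a single common $\preceq$-predecessor rather than routing through two endpoints as you do.
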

\begin{proof}
Let $g\in W$ and let $s\in S$ with $\ell(gs)<\ell(g)$. Let $g'=gs$. Since $p(g)\preceq g'\preceq g$, iterating Lemma~\ref{lem:part2}, we obtain $$\cdots \preceq p^2(g') \preceq p^2(g)\preceq p(g') \preceq p(g) \preceq
g'\preceq g,$$ 
where $p^k$ is defined inductively as $p^0(g)=g$ and $p^k(g)=p(p^{k-1}(g))$ for $k>0$.

Let $v,v'\in \mathcal V$ represent $g,gs,$ respectively. 
Let $1\leq i\leq \ell(g)$, and let $h,h'\in W$
be the elements represented by $v(i), v'(i)$, respectively.
We then have $\ell (p^{k}(g'))\leq i\leq \ell(p^{k}(g))$, or $\ell (p^{k+1}(g))\leq i\leq \ell(p^{k}(g'))$, for some $k\geq 0$. 
Furthermore, by Proposition~\ref{prop:bound}, we have that both $h,h'$ are at distance $\leq C$ from
$p^{k+1}(g)$ (respectively, $p^{k+1}(g')$) in $X^1,$ and so $\ell (h^{-1}h')\leq 2C$.
\end{proof}

\begin{lemma}
\label{lem:part3}
The voracious language satisfies part (iii) of the definition of biautomaticity.
\end{lemma}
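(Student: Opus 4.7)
Condition~(iii) is symmetric in the pairs $(g,v)$ and $(g',v')$, since $\ell(v(i)^{-1}s^{-1}v'(i))$ equals $\ell(v'(i)^{-1}s^{-1}v(i))$ after inverting and using $s^2=1$; so I may assume $\ell(sg)\leq\ell(g)$, giving $\ell(g')=\ell(g)-1$ and $g'=sg\preceq g$. For $i\geq\ell(g')$ the word $v'(i)$ represents $g'=sg$, and since left multiplication by~$s$ is an isometry of~$X^1$ the element $sv(i)$ lies at distance $\ell(g)-i\leq 1$ from~$sg$, so $\ell(v(i)^{-1}s^{-1}v'(i))=d(sv(i),v'(i))\leq 1$. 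The substantive range is $1\leq i<\ell(g')$.

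My plan is to model the remaining argument on Corollary~\ref{cor:part2}. There, iterated application of Lemma~\ref{lem:part2} produces the chain
\[
\cdots\preceq p^{k+1}(g')\preceq p^{k+1}(g)\preceq p^k(g')\preceq p^k(g)\preceq\cdots\preceq g'\preceq g,
\]
and combined with Proposition~\ref{prop:bound} this places both $v(i)$ and $v'(i)$ within distance~$C$ of a common iterated voracious projection. Here I would instead aim to produce, for each~$i$, a matching of indices $k\mapsto k'$ such that $v(i)$ lies within $C$ of $p^k(g)$, that $v'(i)$ lies within $C$ of $p^{k'}(g')$, and that one has a uniform bound $d(sp^k(g),p^{k'}(g'))\leq D$ with $D=D(W)$. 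Combined with the isometry property of left multiplication, the triangle inequality then yields $\ell(v(i)^{-1}s^{-1}v'(i))=d(sv(i),v'(i))\leq 2C+D$, which can serve as~$C'$.

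The hard part is the uniform comparison~$D$. A direct transposition of Lemma~\ref{lem:part2} would require $p(g)\preceq sg$, equivalently that $p(g)$ lie on the $\id$-side of~$\mathcal{W}_s$. This holds automatically when $\mathcal{W}_s\in\mathcal{W}(g)$, in particular whenever~$g$ is adjacent to~$\mathcal{W}_s$ in~$X^1$, or whenever no wall separating $g$ from $\mathcal W_s$ exists (as in the spherical parabolic case where all walls meet $\mathcal W_s$). However it fails in general, as already happens for long alternating elements $g$ in the infinite dihedral group, where $\mathcal{W}_s$ is far from~$g$ and $p(g)$ lies strictly on the $g$-side of~$\mathcal{W}_s$. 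Moreover, the crude triangle bound $d(p(x),p(y))\leq d(x,y)+2C$ degrades linearly under iteration. I would address this by comparing $\mathcal{W}(g)$ and $\mathcal{W}(sg)$ directly: they differ only through $\mathcal{W}_s$ itself and through walls that interact with $\mathcal{W}_s$ via the rank-$3$ configurations handled by Lemma~\ref{lem:key}. Propagating this controlled discrepancy through each voracious-projection step should furnish the uniform bound~$D$ and let the argument close in the template of Corollary~\ref{cor:part2}.
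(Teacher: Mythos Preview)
Your proposal correctly identifies the obstacle---that $p(g)\preceq sg$ can fail, so Lemma~\ref{lem:part2} does not transplant directly---but the final paragraph is not a proof. You assert that $\mathcal W(g)$ and $\mathcal W(sg)$ ``differ only through $\mathcal W_s$ itself and through walls that interact with $\mathcal W_s$ via the rank-$3$ configurations handled by Lemma~\ref{lem:key}'', and that ``propagating this controlled discrepancy'' yields a uniform~$D$. Neither claim is justified. Lemma~\ref{lem:key} is a statement about a single dihedral configuration and one extra wall; it gives no mechanism for comparing the full sets $\mathcal W(g)$ and $\mathcal W(sg)$, and the paper does not use it here. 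More importantly, you have not explained why the discrepancy does not grow under iteration of~$p$: you yourself note that the crude bound degrades linearly, and nothing you write afterwards repairs that.

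The paper's argument works by induction on $\ell(g)$ (taking $\ell(sg)>\ell(g)$) and a dichotomy you have not found. If $\mathcal W_s\notin\mathcal W(sg)$, then one has the \emph{exact} identity $\mathcal W(sg)=s\,\mathcal W(g)$ and hence $p(sg)=sp(g)$; the $s$-twist commutes with one voracious step and the induction proceeds with no accumulated error. If $\mathcal W_s\in\mathcal W(sg)$, then $p(sg)$ lies on the $\id$-side of $\mathcal W_s$, and in fact every $h\preceq p(sg)$ is \emph{not} separated from $\mathcal W_s$ by any wall; the Parallel Wall Theorem then gives $d(h,sh)\leq 2Q$ uniformly. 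This reduces the problem to comparing $v$ and $v'$ as geodesic words to the nearby elements $g$ and $p(sg)$, which is exactly part~(ii) via Corollary~\ref{cor:part2}, and the $s$-twist costs only an additive~$2Q$. The two ingredients you are missing are the exact conjugation identity in the first case and the Parallel Wall Theorem bound on $d(h,sh)$ in the second.
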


\begin{proof}[Proof of Lemma~\ref{lem:part3}] 
Let $C'=2C(C+2Q)+2Q$, where $Q$ is the constant from Theorem~\ref{thm:parallel} and $C$ is the constant from Proposition~\ref{prop:bound}.

We prove part~(iii) of the definition of biautomaticity, with constant $C'$, inductively on $\ell(g)$, where we assume without loss of generality
$\ell(sg)>\ell(g)$. If $g=\id$, then there is nothing to prove. Suppose now $g\neq \id$. Let $v,v'\in \mathcal V$ represent $g,sg,$ respectively.

Assume first $\W_s\notin \W(sg)$. Then we have $\W(sg)=s\W(g)$. Consequently, $p(sg)=sp(g)$. In particular, 
the
words $v'\big(\ell(p(sg))\big)$ and $sv\big(\ell(p(g))\big)$ represent the same element $sp(g)$ of $W$. Then part (iii) of the definition of biautomaticity for $g$
follows inductively from part~(iii) for $p(g)$, for $i<\ell(p(sg))$, or from the definition of~$C$, for $i\geq\ell(p(sg))$.

Second, assume $\W_s\in \W(sg)$. Then $p(sg)$ and $\id$ lie in the same half-space for~$\W_s$. We claim that for any element $h\preceq p(sg)$, there
is no wall $\W'$ separating $h$ from $\W_s$. Indeed, otherwise a geodesic from $sg$ to $\id$ passing through~$h$ would intersect $\W'$ twice. By the
claim and Theorem~\ref{thm:parallel}, we have that $h$ is at distance $\leq 2Q$ from $sh$, which holds in particular for $h=p(sg)$. By the triangle
inequality, $sg$ and $sp(sg)$ are at distance $\leq C+2Q$ in $X^1$, and hence so are $g$ and $p(sg)$. By Corollary~\ref{cor:part2}, for $i< \ell(p(sg))$, we
have that the elements of $W$ represented by $v(i),v'(i)$ are at distance $\leq 2C(C+2Q)$ in $X^1$. Setting $h$ above to be the element of $W$ represented by $v'(i)$, we obtain by the triangle inequality
$\ell\big(v(i)^{-1}sv'(i)\big)\leq 2C(C+2Q)+2Q$, as desired. For $i\geq\ell(p(sg)),$ we have obviously $\ell\big(v(i)^{-1}sv'(i)\big)\leq 2C$ as well.
\end{proof}

\section{Regularity}
\label{sec:regularity}

A \emph{finite state automaton over $S$} (or, shortly, \emph{FSA}) is a finite directed graph $\Gamma$ with:
\begin{itemize}
\item vertex set~$A$, edge set $E\subseteq
A\times A$, 
\item
an edge labeling $\phi\colon E\to \mathcal P(S^*)$ (the power set of $S^*$), where each $\phi(e)$ is finite,
\item
a \emph{start state} $a_0\in A$, and 
\item
a distinguished set of
\emph{accept states} $A_\infty\subseteq A$. 
\end{itemize}
A word $v\in S^*$ is \emph{accepted by $\Gamma$} if there exists a decomposition $v=v_0\cdots v_m$ into
subwords, and a directed edge-path $e_0\cdots e_m$ in $\Gamma$ such that $e_0$ has initial vertex $a_0$, $e_m$ has terminal vertex in~$A_\infty$, and
$v_i\in \phi(e_i)$ for each $i=0,\ldots,m$. A subset of $S^*$ is a \emph{regular language} if it is the set of accepted words for some FSA over $S$.

\begin{prop}
\label{prop:regular} The voracious language is regular.
\end{prop}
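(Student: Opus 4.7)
The plan is to construct a non-deterministic FSA over $S$ (in the sense of the paper's definition, with edges labelled by finite subsets of $S^*$) that accepts $\mathcal V$. Regularity then follows from the standard equivalence between non-deterministic and deterministic finite state automata. The central observation is that both the geodesic condition and the voracious-checkpoint condition are \emph{locally} decidable, using a ball of radius $R=Q+C$ around the current element, where $Q$ is the constant from Theorem~\ref{thm:parallel} and $C$ is the constant from Proposition~\ref{prop:bound}.

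First I would assign to each $h\in W$ a \emph{decorated type} $\tau(h)$: the isomorphism class of the pointed edge-labelled ball $B_R(h)\subseteq X^1$, enriched, for each wall through the ball, with the datum of which of its two half-spaces contains $\id$. By the finite-cone-type property underlying the Parallel Wall Theorem of \cite{Brink-Howlett_1993}, there are only finitely many such decorated types $\tau(h)$ as $h$ varies over $W$, and $\tau(hs)$ is determined by $\tau(h)$ together with $s\in S$. Crucially, $\tau(h)$ determines the voracious projection $p(h)$ as a specific vertex of $B_C(h)$: the set $\W(h)$ is read off as the walls through $B_R(h)$ separating $h$ from $\id$ with no wall between them and $h$; by Proposition~\ref{prop:bound} every element of $P(h)$ lies in $B_C(h)$; and for $p\in B_C(h)$ both conditions $p\preceq h$ and $p\in P(h)$ reduce to specifying on which sides of finitely many walls through $B_R(h)$ the point $p$ lies. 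The $\preceq$-maximum of $P(h)$ provided by Theorem~\ref{thm:well} is then $p(h)$.

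The FSA $\Gamma$ I would construct has states $(\tau,u)$, where $\tau$ is a decorated type and $u\in S^*$ is a geodesic word of length at most $C$; the start state is $(\tau(\id),\varepsilon)$ and the accept states are those of the form $(\tau,\varepsilon)$, where $\varepsilon$ denotes the empty word. There are two kinds of edges. A \emph{letter edge} $(\tau,u)\to (\tau(hs),us)$ labelled by $\{s\}$, whenever $\tau$ certifies $\ell(hs)=\ell(h)+1$ and $|us|\leq C$; and a \emph{commit edge} $(\tau,u)\to (\tau,\varepsilon)$ labelled by the singleton $\{\varepsilon\}$ containing the empty word, whenever the computation described above certifies that $u$ represents the element $p(h)^{-1}h$ of $W$.

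A word $v$ is then accepted by $\Gamma$ if and only if it admits a decomposition $v=u_1\cdots u_m$ such that, interleaving letter edges producing the $u_j$ with commit edges between them, one traces a directed edge-path from the start state to an accept state. This amounts to requiring that $v$ is a geodesic and that the sequence $h_0=\id, h_1,\dots, h_m$ of elements visited at commit points satisfies $p(h_j)=h_{j-1}$ for $1\leq j\leq m$, which by the recursive definition of $\mathcal V$ is exactly the statement $v\in \mathcal V$. The main obstacle in executing this plan is confirming that the decorated types form a finite set with a well-defined transition function under right multiplication by generators; this is essentially the finite-cone-type property of Coxeter groups extracted from \cite{Brink-Howlett_1993}, and once it is in place, the verification that $\Gamma$ accepts exactly $\mathcal V$ is a direct bookkeeping exercise.
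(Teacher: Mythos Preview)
Your proposal has a genuine gap at exactly the point you flag as the main obstacle. The assertion that $\tau(hs)$ is determined by $\tau(h)$ and $s$ is \emph{not} the finite-cone-type property of \cite{Brink-Howlett_1993}: their automaton tracks which \emph{elementary roots} (essentially, the walls in the set $\U$ of Definition~\ref{def:reg}) lie between $h$ and $\id$, whereas your $\tau(h)$ records the side of $\id$ relative to every wall through the much larger ball $B_R(h)$. When you move from $h$ to $hs$ with $\ell(hs)>\ell(h)$, a wall $\W$ may enter $B_R(hs)$ from outside $B_R(h)$, and whether $\W$ separates $hs$ from $\id$ is not recorded in $\tau(h)$. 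For small $R$ the claim is actually false---in the dihedral group of order~$6$ with $R=1$ one has $\tau(s)=\tau(ts)$ but $\tau(st)\neq\tau(tst)$, even though both transitions increase length---so at the very least an argument specific to large $R$ is required, and you have not supplied one; it is certainly not to be found in \cite{Brink-Howlett_1993}. There is also a minor slip: Proposition~\ref{prop:bound} only bounds $d\big(p(h),h\big)$, not the distance from $h$ to an arbitrary element of $P(h)$; indeed $\id\in P(h)$ always. This is harmless, since $p(h)$ is still identifiable as the unique element of $P(h)\cap B_C(h)$ of maximal word length.

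The paper's proof sidesteps the transition issue entirely. Its automaton has state set $\mathcal P(\U)$, with the state after processing a word for $g$ equal to $g^{-1}\W(g)\subseteq\U$. Edges are labelled not by single generators but by entire voracious jumps: all geodesic words for an element $w$ satisfying $p(w)=\id$ together with two compatibility conditions relative to the current state. Because the next state $g^{-1}\W(g)=w^{-1}\W(w)$ is computed directly from $w$ (once the compatibility conditions guarantee $\W(p)\cap\W(g)=\emptyset$), no letter-by-letter update of any richer invariant is ever needed.
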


To prove Proposition~\ref{prop:regular}, we define an FSA $\Gamma$ over $S$ that will accept exactly the voracious language.

\begin{defin}
\label{def:reg} Let $\U$ be the set of walls that are not separated from $\id$ by any other wall, which is finite by Theorem~\ref{thm:parallel}. The
vertex set $A$ of our FSA $\Gamma$ is the power set $\mathcal P(\U)$. Let $a_0=\emptyset\subset \U$, and $A_\infty=A$.

To define the edges from $a\in A$, suppose that $w\in W$ satisfies:
\begin{enumerate}[(a)]
\item $w$ is not separated from $\id$ by any wall in $a$, and
\item $w$ is separated from each wall in $a$ by another wall, and
\item $p(w)=\id$.
\end{enumerate}
We then put en edge $e$ in $E$ between $a$ and $w^{-1}\W(w)$ with $\phi(e)$ consisting of all the minimal length words representing $w$.
\end{defin}

\begin{proof}[Proof of Proposition~\ref{prop:regular}]
Let $\Gamma$ be the FSA from Definition~\ref{def:reg}, and let $\mathcal V$ be the voracious language.
We argue inductively on $j\geq 0$ that, among the words $v\in S^*$ of length $\leq j$,
\begin{itemize}
\item
$\Gamma$ accepts exactly the words in $\mathcal V$, and
\item
the accept state of each such word $v$ is $g^{-1}\W(g)$, where $v$ represents $g\in W.$
\end{itemize}
This is true for $j=0$ by our choice of $a_0$. Now let $n>0$ and suppose that we have verified the inductive hypothesis for all $j<n$. Let $v$ be a
word in $S^*$ of length $n$.

Suppose first that $v$ is a word in $\mathcal V$ representing $g\in W$. Let $p=p(g), w=p^{-1}g.$ 
By the definition of~$\mathcal V$, we
have $v\big(\ell(p)\big)\in \mathcal V$. Moreover, $v\big(\ell(p)+1,n\big)$ represents~$w$. By the inductive hypothesis, $\Gamma$ accepts $v\big(\ell(p)\big)$. Furthermore, $v\big(\ell(p)\big)$
labels some directed edge-path in $\Gamma$ from $a_0$ to $p^{-1}\W(p)$. We will now show that $\Gamma$ has an edge~$e$ from $a=p^{-1}\W(p)$ to
$g^{-1}\W(g)$, with $\phi(e)$ consisting of all minimal length words representing $w$. To do that, we verify the conditions for $w$ from
Definition~\ref{def:reg}. Condition~(a) follows from the fact that $p\preceq g$ and so $g$ is not separated from $p$ by any wall in $\W(p)$. Since
$p\in P(g)$, we have that $\W(p)$ is disjoint from $\W(g)$, which implies condition~(b) and $g^{-1}\W(g)=w^{-1}\W(w)$. Consequently, $p(g)p(w)\in
P(g)$ implying $p(w)=\id$, which is condition~(c).

Conversely, let $v$ be accepted by $\Gamma$ and suppose that $v=v_0\cdots v_m$ as in the definition of an accepted word. By the inductive hypothesis,
the word $v_0\cdots v_{m-1}$ belongs to $\mathcal V$ and represents $p\in W$ such that $e_m$ starts at $a=p^{-1}\W(p)$. By the definition of the
edges, $v_{m}$ is a minimal length word representing an element~$w$ satisfying the conditions~(a,b,c). By condition~(a), we have $p\preceq g$ for
$g=pw$. By condition~(b), we have $w^{-1}\W(w)=g^{-1}\W(g)$. Thus by condition~(c), we have $p=p(g)$. Consequently, $v\in \mathcal V$, as desired.
\end{proof}

\bibliography{mybib}{}
\bibliographystyle{plain}

\end{document}